\newcommand{\ing}{P}
\newcommand{\ATC}{\mathsf{ATC}}
\newcommand{\at}{\mathfrak{at}}
\newcommand{\pl}{\mathsf{pl}}
\newcommand{\are}{\preccurlyeq}
\newtheorem{theorem}{Theorem}
\newtheorem{lemma}{Lemma}
\title{Atomism Axiomatised Using 
Mereological Composition as a Primitive Notion\footnote{I would like to express my sincere gratitude to Achille Varzi for his time and effort in providing his expertise and guidance, which have been instrumental in shaping the final outcome of this work.
}}
\author{Marcin Łyczak}
\date{The work was written during my scholarship at Columbia University under the supervision of Achille Varzi (16.11.2022 - 8.12.2022) and was submitted to Synthese on 16.03.2023.}
\begin{document}
\maketitle
\begin{abstract}
Atomism is the view that \textit{everything is composed of atoms}. The
view within the framework of the contemporary formal approach is expressed on
the ground of mereology with the use of the primitive notion of
\textit{being a part} as \textit{every object has at least one atomic part} \cite[48]{CaVa99}, \cite[145]{CoVa21}, \cite[42]{Si87}, or using \textit{mereological fusion}
\cite{Sh14, Va17} which is defined by being a part. We will briefly present a
discussion between A. Varzi and A. Shiver concerning the two approaches and
propose a new intuitive axiomatic characterization of atomism. We build a system with a primitive notion of composition that holds between individuals and
pluralities. We assume only two specific axioms: \textit{each object is a unique composition
of unique atoms}, and \textit{being a composition of some objects is
equivalent to being the composition of all atoms of these objects}.
 In our approach, notions of \textit{part}, and \textit{atom}, are secondary to composition: atom is defined as an object that cannot be a composition of
two or more objects, and part is defined as inclusion between atoms.
We will show that the theory, with only these two
specific axioms, is sufficient to adequately express atomism as we prove that the theory is definitionally equivalent to \textit{atomistic
extensional mereology} with plural quantification allowed and
mereological fusion defined. Our theory requires neither full
\textit{comprehension schema} nor the existence of any specific compositions (there are models with only atoms). Therefore, it may constitute the basis on which atomistic concepts of reality are further strengthened, specifically those in
which the notion of being a part is not used. To the best of our knowledge, our proposal is the only formal theory of atomism using the primitive notion of composition.
\end{abstract}
\section*{Introduction}
In philosophy, there are many different kinds of atomism. Here, we understand atomism in a very general sense: as a view that claims that everything that \textit{exists} is either a simple object or is composed of simple objects, with the assumption that something \textit{exists}. Atomism is justified by different theories, where we find various interpretations of the statement
\begin{equation}
\tag{$\mathsf{AT1}$}\label{TA1}
\mbox{\textit{Everything is composed of atoms}}.
\end{equation}
Atomism within the framework of the contemporary formal approach is usually
expressed on the ground of mereology, which is a formal theory of parts
and wholes, originally formulated by S. Leśniewski. The central notion of mereology is mereological
\textit{composition} which is usually defined with the use of the primitive notion of
\textit{being a part}/\textit{proper part}. These notions are appropriately specified in different versions of mereology. Loosely speaking, the mereological composition of given
objects is a concrete whole composed of all of them, and only them. In contemporary literature \cite[48]{CaVa99}, \cite[145]{CoVa21}, \cite[42]{Si87}, however, 
atomism is formulated without using the notion of composition as
\begin{equation}
\tag{$\mathsf{AT2}$} \label{TA2} \mbox{\textit{Every object has at least
one atomic part}}.
\end{equation}
A. Shiver criticizes articulating atomism as \eqref{TA2} and argues
that \eqref{TA1} should be expressed using mereological fusion
\cite{Sh14}. However, as A. Varzi shows, 
having only assumptions of reflexivity and the transitivity of being
a part and the definition of mereological composition: \textit{sum} or \textit{fusion}, formal counterparts of \eqref{TA1} and \eqref{TA2} are equivalent already in classical logic 
\cite{Va17}. As Varzi pointed out, the main problem of the models
considered by Shiver against \eqref{TA2} does not lie in the wrong
formulation of atomism by \eqref{TA2}, but that being a part is not well
founded.\footnote{Currently, more powerful variants of atomism called `superatomisms' have been proposed by Cotnoir  \cite{Co13}. In superatomism, the proper part relation is
well-founded i.e.
no descending chain of a proper parthood relation can be infinite.
This is being analyzed and developed \cite{Di20, Uz17}. On the other
hand, \textit{gunky} models of atomism are also being considered \cite{Ts15}. In models of this kind every object is composed of atoms but some objects
may be divided infinitely into smaller and smaller parts, and all of
these parts have an uncountable number of their own parts. We follow Varzi's view 
that \textit{atomism is a thesis about
composition, not decomposition} \cite{Va17} and direct our attention here to atomism in
a broad sense as just \eqref{TA1}.} We believe that Varzi is formally right, at the same time we agree with Shiver in that \eqref{TA2} does not intuitively express the atomistic position. 
In defend of \eqref{TA2} at least two issues can be indicated: \eqref{TA2} is expressible in first-order logic, and it is not necessary to decide how mereological composition is defined and what its properties are. However, that from an intuitive point of view, mereological composition very clearly express atomism. 
Shiver expresses atomism using defined notion of mereological fusion that is secondary to being a part. We propose reformulating Shiver's approach and take mereological composition as the only primitive notion.\footnote{Let us note that there are philosophical arguments for treating mereological composition as primitive \cite{Fi10, Kl17}, but they need not relate to \eqref{TA1} and so we will not discuss them here.} We aim to:
\begin{itemize}\setlength\itemsep{0em}
\item find a formal system with a primitive notion $F$ of composition that holds between individuals, and pluralities;
\item show that with this notion the standard mereological notions of part and atom are definable, and primitive operator $F$ can be understood as a standard composition;
\item show that the system is definitionally equivalent to standard system of extensional atomistic mereology, in which plural quantification is allowed.
\end{itemize} 
We begin with some remarks on mereological composition, atomistic mereology, and the discussion between Shiver
and Varzi (\ref{1}).
Next, we provide new axiomatics for atomistic extensional mereology, where a formal counterpart of (\ref{TA1}) is an axiom, and the only primitive notion is composition (\ref{2}). Finally, we prove definitional equivalence of the presented system with the standard axiomatization of  \textit{atomistic extensional mereology} in which plural quantification is allowed  (\ref{3}).
\section{Mereological composition, extensional mereology and atomicity}\label{1}
The first version of mereology was given by Leśniewski
in a natural language \cite{Le16} (trans. in
\cite{Le92}). He later presented a mature version of mereology
on the ground of ontology, which is a theory of objects, expressed in a
first-order identity-free language. The only primitive notion of his
ontology was a two-place predicate $\varepsilon$ read `is', applied to two names of the same category. 
Leśniewski added to his system a primitive name-forming operator $pt$ applied to names, read
`part of'. The atomic expression $x\varepsilon pt(y)$ is read `$x$ is a part of $y$'. The original Leśniewski's ontological approach to mereology is described by R. Urbaniak in \cite{Ur14}. A. Tarski extracted mereology from its ontological context and treated
mereology as a theory of mathematical relational structures with the primitive relation of being a part, and used this as a foundation for point-free geometric
considerations \cite{Ta56}. 
Mereology today is very willingly treated as just first-order theory with identity, with the
only primitive predicate for `is a part/proper part'. This approach was described by P. Simons in \cite{Si87}, and more recently by Varzi and Cotnoir \cite{CoVa21}. While mentioned approaches differ, they share a common core \cite{VaStanford}. We focus on mereology which allows plural quantification described in \cite[238-245]{CoVa21}.
Mereology with plural quantification has greater expressive power than first-order mereology, what is used to formal analyzes of atomism \cite{Di20, Gi23}. Quantification over pluralities allows us to say that for any object, there exist atoms from which the object is composed, and this is crucial for our characterization of atomism. As we mentioned, the mereological composition of given objects is a concrete of all of them, and only them, put together. Mereological composition is a subject of actual philosophical studies concerning, 
inter alia, its extensionality, the relationship between wholes and their parts, the existence of arbitrary compositions, and the nature of how objects are put together \cite{CaLa21}. 
In the original mereology, to
say that something is a mereological composition of some objects Leśniewski used  $x\varepsilon Kl(y)$. What is important is that in the expression
$x\varepsilon Kl(y)$ both $x$ and $y$ are variables of the same category. In
modern first-order approaches, counterparts of non-empty names used by
Leśniewski are constructed with the use of formulas with at least one free variable. To
say that $x$ is a mereological composition of objects that are $\varphi$,
$F_{\varphi_{y}}x$ is used, where $y$ is a free variable in
the formula $\varphi$. In the case of mereology with plural
quantification, there is no need to use formulas with free variables, because plural variables play their role. The mereological
composition is expressed then as $F_{zz}x$, where $zz$ is a plural variable and $x$ is individual variable. In the
case of mereological relational structures, the notation $x F X$ is used, where $F$ is a relation between an element $x$ belonging to the domain of a given structure  and $X$ is a distributive subset of the domain.

Shiver formulates atomism using plural quantification and defined mereological fusion. 
We follow the nomenclature from \cite[238-245]{CoVa21} and use individual variables: $x, y, z$, and plural ones: $xx, yy, zz,...$ . The logical symbols are classical connectives, quantifiers, identity $=$ applied to
individual variables, and $\prec$ applied to individual and plural variables on the left and right side, respectively. Expression $x\prec yy$ is read `$x$ is one of $yy$'s'. The only non-logical symbol is $P$, for `is a part of' applied to individual variables. We assume as a base only axioms of two sorted logic and axioms for first-order identity. 
Following Varzi \cite{VaStanford}, we call
\textit{extensional mereology} the theory which has the following theses:
\begin{gather}
     \tag{$\mathtt{ref}$} \label{ring} P xx,\\
\tag{$\mathtt{trans}$} \label{transing} Pxy \land P yz \to Px z,\\
\tag{$\mathtt{ants}$} \label{antising} Pxy \land Pyx \to x=y,\\
\tag{$\mathtt{ssp}$}\label{ssp} \neg Px y \to \exists z (P z x \land
\neg O z y ),
\end{gather}
where $O$ is a predicate for `overlapping' defined as 
\begin{equation}
\tag{$O$} \label{defO} O x y := \exists z (P z x\land P
zy).
\end{equation}
We denote the set of theses of extensional mereology with plural quantification as $\mathsf{EM}_{\pl}$ and we use the following notation
$$\mathsf{EM}_{\pl}=\eqref{ring}+\eqref{antising}+\eqref{transing}+\eqref{ssp}.$$
Let us briefly comment extensional mereology. Formulas \eqref{ring},  \eqref{transing}, and \eqref{antising} taken as axioms state that the semantic correlate of predicate $P$ is a \textit{partial order relation}. This is the core of
mereological theories which describe being a part in an inclusive sense. Let us focus for a moment on \eqref{ssp} and its properties. Formula \eqref{ssp} is called the \textit{strong supplementation principle} and was considered in the context of mereology by Simons \cite[28-29]{Si87}, however its even stronger version was used as a mereological axiom already by Leśniewski \cite{SwLy20}. \eqref{ssp} is a reinforcement of widely discussed in philosophical literature \cite{Co21} the \textit{weak supplementation principle}
\begin{equation}
\tag{$\mathtt{wsp}$}\label{wsp} PPx y \to \exists z (P z y \land
\neg O z x),
\end{equation}
where $PP$ is a predicate for `proper part' defined as 
$$PPxy:= Pxy\land \neg x=y.$$
The axiom \eqref{ssp} is stronger than \eqref{wsp} because assuming partial order axioms for $P$ \eqref{ssp} implies \eqref{wsp}; the converse does not hold. Moreover, what is more important for us, \eqref{ssp} added to partial order axioms for $P$ allows one to prove (a): the equivalence of two formulations of mereological composition: sum and fusion, which we will introduce soon, as well as ensuring that (b): if there is a given sum, then it is unique, and (c): if there is a given fusion, then it is unique. Of course, (a) implies (b)$\leftrightarrow$(c). However, weak supplementation principle \eqref{wsp} added to partial order axioms for $P$ allows to prove only (b) (see e.g. \cite{Lo21}). Thus, \eqref{ssp} is convenient for building weak mereological theories that do not determine which sums/fusions exist, while still guaranteeing their generally wanted properties. 

We introduce a definition of Leśniewski's mereological sum $F_{zz}x$, read `$x$ is a mereological sum of $zz$'s'; and a definition of mereological fusion $F^{\star}_{zz}x$, read `$x$ is a mereological fusion of $zz$'s' in the following way:
\begin{gather}
\tag{$\mathtt{Df}.F$} \label{defF} F_{zz} x \leftrightarrow \forall_{z\prec zz}Pz x \land \forall y(P y x \to \exists_{z\prec zz} O z y),\\
         \tag{$\mathtt{Df}.F^{\star}$} \label{deffu} F^{\star}_{zz}x \leftrightarrow \forall y (Oyx \leftrightarrow \exists_{z\prec zz } O z y).
     \end{gather}
In \textit{general extensional mereology}, after Leśniewski, it is assumed that for any objects
represented by any non-empty general name there exists (unique) mereological composition of
them \cite{Ho08}.
Regardless of the definition of composition and its existential assumptions, an \textit{atom} is any object that has no proper parts, which we define as
\begin{gather}
\tag{$\mathtt{Df}. A_{P}$}\label{dfAP} Ax \leftrightarrow \forall y (Pyx \to x=y).
\end{gather}
Using the above we can express \eqref{TA2} as
\begin{equation}
\tag{$\mathtt{AT2}_{\ing}$}\label{TA2P} \forall x \exists y
(P y x \land  Ay).
\end{equation}
To express atomism, which we want to discuss here, Shiver uses the notion of fusion.
Using plural quantification, he define plural constant $aa$ for `atoms', which we introduce as an instantiation of comprehension schema
\begin{equation}
\tag{$\mathtt{Df}.aa_{P}$} \label{aaP}
x\prec aa\leftrightarrow Ax.
\end{equation}
Moreover, Shiver uses the predicate $S$ for `being a fusion of atoms'
\begin{equation}
\tag{$S$}\label{S} Sx := \exists_{yy\are aa} F^{\star}_{yy}x,
\end{equation}
where $\are$ is many-many predicate read as `$xx$'s are among $yy$'s' that we define as
\begin{equation}
\notag
xx\are yy := \forall z(z\prec xx \to z\prec yy) \land \exists x (x\prec xx).
\end{equation}
Atomistic standpoint \eqref{TA1} my be expressed as 
$$\forall x (Sx).$$
Shiver expresses the atomistic standpoint in slighty different way, which he calls \textit{general atomicity}, by the formula
$$\forall x \exists y (Pxy \land Sy).$$
This approach is analyzed by Varzi in first-order mereology in
\cite{Va17} (and later also in \cite[145-147]{CoVa21} and \cite{Gi23}). We express Varzi analysis in mereology with plural quantification, because  we want to be consistent with earlier and later consideration. We take the notion of atoms 
as Shiver does, and we introduce a notion of
\textit{atomic parts of}
\begin{equation} \tag{$\mathtt{Df}.\at^{\mathsf{ind}}_{P}$} \label{atP}y\prec
\at_{x}\leftrightarrow Py x \land y\prec aa.
\end{equation}
Following Varzi's considerations, having only \eqref{ring} and \eqref{transing}, axiom \eqref{TA2P} implies
$$\forall x(\forall y (Oyx\leftrightarrow \exists_{z\prec
\at_{x}}O yz)),$$ 
and
$$\forall x(\forall_{y\prec \at_{x}} Py x\land \forall y(P y x\to
\exists_{z\prec \at_{x}} O yz)).$$
These two formulas, applying the definitions of mereological fusion \eqref{deffu} and sum \eqref{defF}, yields
\begin{equation}
\tag{$\mathtt{AT1}_{\mathcal{F}}$}\label{TA1F}
\forall x (\mathcal{F}_{\at_{x}}x), \mbox{ for } \mathcal{F}\in \{F^{\star}, F\}.
\end{equation}
The above schema expresses atomism \eqref{TA1} in both senses of mereological composition: fusion and sum. \smallskip\\
From \eqref{TA1F} and the reflexivity of being a part \eqref{ring} we infer
$$\forall x \exists y \exists_{zz\are aa} (Pxy \land \mathcal{F}_{zz}y), \mbox{ for } \mathcal{F}\in \{F, F^{\star}\}.$$
When $\mathcal{F}$ is $F^{\star}$, we use definition \eqref{S} and obtain Shiver's formula for general atomicity.

Following Varzi \cite{VaStanford} we call \textit{atomistic extensional mereology} a theory which is extensional mereology and has as a thesis \eqref{TA2P}. We denote atomistic extensional mereology with the definition of mereological sum \eqref{defF} allowing plural quantification by $\mathsf{AEM}_{\pl}$
$$\mathsf{AEM}_{\pl}=\mathsf{EM}_{\pl}+\eqref{TA2P}+\eqref{dfAP}+\eqref{defF}.
$$
As we mentioned, in $\mathsf{EM}_{\pl}$ formulas characterizing mereological fusion \eqref{deffu} and mereological sum \eqref{defF} are equivalent. So, in $\mathsf{AEM}_{\pl}+\eqref{aaP}+\eqref{atP}$ it does not really matter which
definition of composition we take: \eqref{TA2P} implies \eqref{TA1F},
and the converse implication follows just from the definition of mereological sum.

Before we move on to the main considerations, we would like to discuss
the issue of the primitive notions that are used to axiomatize
mereology.
As we have said, popular approaches to mereology use the primitive notion of
being a part/proper part. Possible axiomatizations with this notions is still being
discussed \cite{CoVa18, Ho08, Va19}. The rich catalog of axiomatizations with the primitive notions of \textit{being external},
\textit{overlapping}, as well as attempts to base mereology on other primitive concepts is given in \cite{So84}.
In the latter mereology is expressed on the basis of Leśniewski's ontology.
The only known axiomatics with the primitive notion of mereological composition is Lejewski's single axiom for general extensional mereology \cite[222]{So84}. In this case, the quantification over function symbols is used, which is allowed in the full original version of Leśniewski's  ontology. Thus, this axiom is not translatable into the language of modern mereologies, in particular also into the language that we use in our approach. We also believe that our axiomatics has the advantage of being more intuitive than Lejewski's equivalence axiom, which in the Polish notation has seventy-seven symbols.
\section{Theory \texorpdfstring{$\ATC$}{LG}. Axiomatization} \label{2}
Now we formulate an axiomatic theory of atomism in which the primitive notion is composition. We do not assume the existence of any specific
compositions: our theory is open to axiomatic strengthening as needed.
We express it on the basis of a very small fragment of the logic of plurals $\mathsf{PFO}$ \cite[15-19]{FlLi21} without
comprehension schema, and without assumption of non-emptiness of plurals. In other words, in our proposal, one can accept that axioms or reject them.\smallskip\\
The only non-logical symbol of our theory is $F$ for composition, used in the context with individual variables, and
plural terms. Plural terms are: plural variables, $aa$ for `atoms', and
$\at_{\pmb{xx}}$ for `atoms of which
$\pmb{xx}$'s are composed of'. 
\smallskip\\
The following two are specific axioms of our theory:
\begin{gather}
\tag{$\mathtt{ATC1}$}\label{AC1} \forall x \exists_{zz\are aa}
(F_{zz}x \land \forall_{yy\are aa}(F_{yy}x\leftrightarrow zz\approx yy)\land \forall
y (F_{zz}y\to x=y)),\\
\tag{$\mathtt{ATC2}$}\label{AC2} F_{zz}x\leftrightarrow F_{\at_{zz}}x,
\end{gather}
where :
\begin{gather}\tag{$\mathtt{Df}.aa_{F}$} \label{dfaa}
x\prec aa\leftrightarrow \forall yy\forall_{z\prec yy} (F_{yy}x\to
z=x),\\
\tag{$\mathtt{Df}.\at^{\pl}_{F}$} \label{dfpl} x\prec
\at_{zz}\leftrightarrow \exists_{y\prec zz} \exists_{yy\are
aa}(F_{yy}y \land x\prec yy ),\\
\tag{$\are$}\label{are}
xx\are yy := \forall z(z\prec xx \to z\prec yy)\land \exists x x \prec xx\\
\tag{$\approx$}\label{dfapprox} zz\approx yy := \forall x (x\prec zz \leftrightarrow x \prec yy).
\end{gather}
Let us briefly comment on our axiomatization. \eqref{AC1} states that
everything is a unique composition of unique atoms.
\textit{Atom} is defined as an object that cannot be a composition of
two or more objects, and \textit{atoms of} $\pmb{zz}$'s
are defined as the sum of all atoms of all individuals that are $\pmb{zz}$.
The axiom \eqref{AC1} is intended to capture \eqref{TA1}, but
\eqref{AC1} itself does not guarantee that $F$ is the mereological sum
in Leśniewski's sense. This is why we need one more axiom. \eqref{AC2}
states that the composition of some objects is equivalent to the composition of the atoms of these objects. Formulas \eqref{dfaa} and \eqref{dfpl} are instantiations of comprehension schema of axioms.\footnote{In $\mathsf{PFO}$, the comprehension schema is the following: $\exists x(A(x)) \to \exists zz \forall x(x \prec zz \leftrightarrow A(x))$, because in $\mathsf{PFO}$, all plurals must be non-empty. We do not assume either the full comprehension schema or that every plural must be non-empty. Our instantiations of comprehension schema \eqref{dfaa} and \eqref{dfpl} do not have in the predecessor non-emptiness assumption. However, our axioms guarantee that $aa$ is non-empty, and if $F_{zz}x$, then also $zz$ and $\at_{zz}$ are non-empty.} Pluralities $aa$ and $\at_{\pmb{xx}}$ are not primitive, as they are defined using the primitive mereological composition $F$.
\smallskip\\
We denote our axiomatic theory of atomistic compositions as $\ATC$, so
$$\ATC=\eqref{AC1}+\eqref{AC2}+\eqref{dfaa}+\eqref{dfpl}$$
As we show, this axiomatic theory of atomism is definitionally equivalent to atomistic extensional mereology with plural quantification.
\section{Definitional equivalence of \texorpdfstring{$\ATC$}{LG} and
\texorpdfstring{$\mathsf{AEM}_{\pl}$}{LG}}\label{3}
From axiom \eqref{AC1} we know that each individual there is exactly one plurality that is a unique composition of unique atoms
$$ \forall x \exists^{1}zz (zz\are aa\land 
(F_{zz}x \land \forall_{yy\are aa}(F_{yy}x\leftrightarrow zz\approx yy)\land \forall
y (F_{zz}y\to x=y)),$$
where $\exists^{1}zz A(zz) \leftrightarrow \exists zz A(zz) \land \forall xx \forall yy (A(xx)\land A (yy)\to xx\approx yy)$. \smallskip\\
We can, thus, introduce one place operation that to every $x$ attributes the plurality of all atoms of $x$. For convenience, we use in $\ATC$ the same constant symbol $\at$ as it is used for the atoms of pluralities. We have 
\begin{equation}
\tag{$\mathtt{Df}.\at_{F}^{\mathsf{ind}}$}\label{defatx}
\at_{x}\are aa \land F_{\at_{x}}x \land \forall_{yy\are aa}(F_{yy}x\leftrightarrow yy\approx\at_{x})\land \forall y (F_{\at_{x}}y\to x=y).\end{equation}
Let us note, that theorem $\forall x (\at_{x}\are aa)$ guarantee that pluralities $aa$, and $\at_{x}$ are always non-empty.\smallskip\\
To prove definitional equivalence, we conservatively extend $\ATC$ by
a definition of being a part. We do this using many-many inclusion among
atoms of two given objects as
\begin{equation}
\tag{$\mathtt{Df}.P_{F}$}\label{dfP} Pxy \leftrightarrow  \at_{x}\are \at_{y},
\end{equation}
which may be read as: $x$ is a part of $y$ iff all atoms of $x$ are atoms of $y$.\smallskip\\
Now we turn to the main goal. We formulate proofs using
natural deduction. First, we note that $\ATC$ characterizes being a part as a partial order:
\begin{lemma}\label{lemmaPO}
\eqref{ring}, \eqref{transing}, and \eqref{antising} are provable in
$\ATC+\eqref{dfP}$.
\end{lemma}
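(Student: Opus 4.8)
The plan is to unwind the definition \eqref{dfP} and reduce each of the three partial-order properties to a purely plural-logic fact about the many-many relation $\are$ among the atom-pluralities $\at_x$. The key enabling observation, which I would record first, is that \eqref{defatx} together with $\forall x(\at_x\are aa)$ guarantees that for every $x$ the plurality $\at_x$ is \emph{non-empty}; this matters because the non-emptiness conjunct in the definition \eqref{are} of $\are$ would otherwise block the argument. So the very first step is to note $\exists y\, y\prec\at_x$ for all $x$, which follows since $\at_x\are aa$ already contains $\exists x\, x\prec aa$-style witnessing... more precisely, from $\at_x\are aa$ and the definition of $\are$ we get $\exists z\, z\prec\at_x$ directly.

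For \eqref{ring}, i.e. $Pxx$, I unfold to $\at_x\are\at_x$, which by \eqref{are} means $\forall z(z\prec\at_x\to z\prec\at_x)$ (trivial) conjoined with $\exists z\, z\prec\at_x$ (the non-emptiness fact just established). For \eqref{transing}, from $Pxy$ and $Pyz$ I get $\at_x\are\at_y$ and $\at_y\are\at_z$; the inclusion conjuncts chain together by transitivity of $\to$, and the non-emptiness conjunct of $\at_x\are\at_z$ is again supplied by non-emptiness of $\at_x$, giving $\at_x\are\at_z$, hence $Pxz$. For \eqref{antising}, from $Pxy$ and $Pyx$ I obtain $\at_x\are\at_y$ and $\at_y\are\at_x$, whose inclusion conjuncts together yield $\at_x\approx\at_y$ (using the definition \eqref{dfapprox}). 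The remaining work is to pass from $\at_x\approx\at_y$ to $x=y$: here I invoke the uniqueness clauses of \eqref{defatx}. Since $\at_y\approx\at_x$ and $\at_x\are aa$, the conjunct $\forall_{yy\are aa}(F_{yy}x\leftrightarrow yy\approx\at_x)$ applied to $yy:=\at_y$ gives $F_{\at_y}x$ (because $F_{\at_y}y$ holds and $\at_y\approx\at_y$, so... more directly, $\at_y\approx\at_x$ so $F_{\at_y}x$); then the uniqueness-of-composed-individual clause $\forall y(F_{\at_x}y\to x=y)$, or its analogue for $\at_y$, forces $x=y$.

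The step I expect to be the main obstacle is precisely this last one: getting $x=y$ out of $\at_x\approx\at_y$. It requires threading together two different uniqueness conjuncts packed inside \eqref{defatx} --- the one saying the composition determines the atom-plurality and the one saying the atom-plurality determines the composed individual --- and being careful that $\approx$ (coextensiveness of pluralities) is exactly the equivalence relation under which these conjuncts are stated, so that substitution is legitimate. Everything else is routine propositional manipulation of the $\are$ and $\approx$ definitions, with the single recurring subtlety that the existential (non-emptiness) conjunct of $\are$ must be re-supplied at each use, which the theorem $\forall x(\at_x\are aa)$ makes available uniformly.
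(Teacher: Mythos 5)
Your proposal is correct and follows essentially the same route as the paper's proof: reflexivity and transitivity reduce to the corresponding properties of $\are$ once non-emptiness of $\at_{x}$ is secured from \eqref{defatx}, and antisymmetry is obtained by deriving $\at_{x}\approx\at_{y}$ and then chaining the two uniqueness conjuncts of \eqref{defatx} (the paper derives $F_{\at_{x}}y$ and applies $\forall y(F_{\at_{x}}y\to x=y)$, which is just the mirror image of your $F_{\at_{y}}x$ step). No gaps.
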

 \begin{proof}
To obtain \eqref{ring} we note that from \eqref{defatx} we have $\exists x (x\prec \at_{x} )$, thus $\at_{x}\are \at_{x}$, from reflexivity of $\are$ i.e. $Pxx$. \eqref{transing} we obtain directly from transitivity of $\are$. To prove
\eqref{antising} we assume $\ing x y \land \ing y x$ and by \eqref{dfP} we
obtain $\at_{x}\are \at_{y}\land \at_{y}\are \at_{x}$ i.e. $\at_{x}
\approx \at_{y}$, by \eqref{dfapprox}. We have
$F_{\at_{y}}y$ from \eqref{defatx}, so using it with $\at_{x} \approx \at_{y}$ we obtain and  $F_{\at_{x}}y$, by \eqref{defatx}, and next again using \eqref{defatx} we get $x=y$.
\end{proof}
\noindent In the proofs of \eqref{ssp} and \eqref{defF} we use the following $\ATC+\eqref{dfP}$ theses:
\begin{gather}
\tag{$\mathtt{T1}$} \label{T1} z\prec aa \land  z\prec \at_{x}\to
\at_{z}\are \at_{x},\\
\tag{$\mathtt{T2}$} \label{T2} z
\prec aa \land Ozy \to z \prec \at_{y}.
\end{gather}
\begin{proof}
For \eqref{T1} we assume $z\prec aa \land z\prec \at_{x}$, and we show $\at_{z}\prec \at_{x}$. $\at_{z}$ is non-empty, so fix any $y\prec \at_{z}$. Then from $y\prec \at_{z}\land
z\prec aa\land F_{\at_{z}}z$ using
the definition of atoms \eqref{dfaa} we have $y=z$. The latter with $z\prec \at_{x}$ yields
$y\prec \at_{x}$, so $\at_{z}\are \at_{x}$.\\
For \eqref{T2} we assume $z \prec aa$ and 
$Pcz\land Pcy$ for some $c$, and we prove $z\prec \at_{y}$. From \eqref{defatx} we
have $F_{\at_{z}}z$ and so using $z\prec aa$ and identity we have
$z\prec\at_{z}\land \forall y(y\prec \at_{z}\to y=z)$, so the only atom
of $\at_{z}$ is $z$. Using this and $\at_{c}\are \at_{z}$ taken from
$Pcz$ and \eqref{dfP} we obtain $z\prec \at_{c}$. The latter with
$\at_{c}\are \at_{y}$ taken from $Pcy$ and \eqref{dfP} yields $z\prec
\at_{y}$.
\end{proof}
We take the definition \eqref{defO} of overlapping as in
$\mathsf{AEM}_{\pl}$ and we show that the strong supplementation
principle is a thesis of $\ATC+\eqref{dfP}$.
\begin{lemma}
\eqref{ssp} is provable in $\ATC+\eqref{dfP}$.
\end{lemma}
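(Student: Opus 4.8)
The plan is to prove $\eqref{ssp}$ by contraposition: assume $\neg Pxy$ and produce a witness $z$ with $Pzx \land \neg Ozy$. By $\eqref{dfP}$, $\neg Pxy$ means $\neg(\at_x \are \at_y)$. Since $\at_x$ is non-empty (by the theorem $\forall x(\at_x \are aa)$ together with $\eqref{are}$) and $\are$ is the conjunction of many-many inclusion with a non-emptiness clause, the failure of $\at_x \are \at_y$ must come from the inclusion clause failing: there is some atom $z$ with $z \prec \at_x$ but $z \not\prec \at_y$. This $z$ is the natural candidate for the supplementing part.

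Next I would check that $z$ works. First, $Pzx$: since $z \prec \at_x$ and $z \prec aa$ (as $\at_x \are aa$), we may apply $\eqref{T1}$ to get $\at_z \are \at_x$, i.e. $Pzx$ by $\eqref{dfP}$. Second, $\neg Ozy$: suppose for contradiction $Ozy$, i.e. $\exists c(Pcz \land Pcy)$. Then from $z \prec aa$ and $Ozy$, thesis $\eqref{T2}$ yields $z \prec \at_y$, contradicting the choice of $z$. Hence $\neg Ozy$, and $z$ is the required witness, establishing $\eqref{ssp}$.

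The main obstacle — really the only delicate point — is the very first step: arguing that the negation of $\at_x \are \at_y$ gives an atom in $\at_x$ that is not in $\at_y$, rather than merely the emptiness of $\at_x$. This is where the theorem $\forall x(\at_x \are aa)$ is essential, since it rules out $\at_x$ being empty; once that is in hand, unpacking $\eqref{are}$ and $\eqref{dfapprox}$ is routine propositional reasoning. After that, the proof is a direct assembly of $\eqref{T1}$, $\eqref{T2}$, $\eqref{dfP}$, and the definition $\eqref{defO}$ of overlapping, with no further subtleties.
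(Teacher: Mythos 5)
Your proof is correct and relies on the same two key theses \eqref{T1} and \eqref{T2} in exactly the roles the paper uses them; the only difference is direction — you assume $\neg Pxy$ and extract a witness atom $z\prec \at_{x}$ with $z\not\prec\at_{y}$, whereas the paper argues contrapositively, assuming every part of $x$ overlaps $y$ and concluding $\at_{x}\are\at_{y}$. Your explicit appeal to the non-emptiness of $\at_{x}$ to rule out a vacuous failure of $\at_{x}\are\at_{y}$ is well placed, and is the same fact the paper needs tacitly to satisfy the non-emptiness conjunct of \eqref{are} at the end of its own argument.
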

\begin{proof}
Assume that $\forall z (Pzx \to Ozy)$. We aim to show that $Pxy$, that is, \linebreak[4]
$\forall _{u} (u\prec \at_{x} \to u \prec \at_{y}$). Thus, fix an arbitrary $u \prec \at_{x}$. Thus, since $u\prec aa$  $u \prec \at_{x}$ gives $\at_{u} \are \at_{x}$ by \eqref{T1} i.e., $Pux$, by \eqref{dfP}. So $Ouy$ by the assumption. We have $u\prec aa$ and $Ouy$ thus $u\prec \at_{y}$ by \eqref{T2}. In consequence we have $\at_{x}\are \at_{y}$, i.e. $Pxy$. 
\end{proof}
Now we have to prove that notion of composition axiomatized in
$\ATC$ is the mereological sum in the Leśniewski sense, i.e. we have
to show that \eqref{defF} is a thesis of $\ATC$.
\begin{lemma}
$\forall_{z\prec zz} Pz x \land \forall y(P y x
\to \exists_{z\prec zz}  O z y )\to F_{zz}x$ is a thesis of
$\ATC+\eqref{dfP}$. \end{lemma}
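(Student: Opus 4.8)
The plan is to prove the implication $\forall_{z\prec zz} Pzx \land \forall y(Pyx \to \exists_{z\prec zz} Ozy) \to F_{zz}x$ by passing through the atoms and using axiom \eqref{AC2}. Assume the antecedent. The strategy is to show $\at_{zz}\approx \at_{x}$, because then from \eqref{defatx} we have $F_{\at_{x}}x$, hence $F_{\at_{zz}}x$, and \eqref{AC2} gives $F_{zz}x$. So the whole proof reduces to establishing the two inclusions $\at_{zz}\are \at_{x}$ and $\at_{x}\are \at_{zz}$.

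For $\at_{zz}\are \at_{x}$: fix $w\prec \at_{zz}$. By \eqref{dfpl} there is some $z\prec zz$ and some $yy\are aa$ with $F_{yy}z$ and $w\prec yy$. Since the atoms of $z$ are unique (from \eqref{defatx}, $F_{yy}z$ with $yy\are aa$ forces $yy\approx \at_{z}$), we get $w\prec \at_{z}$, and in particular $w\prec aa$. Now $z\prec zz$ gives $Pzx$ by the first conjunct of the antecedent, so $\at_{z}\are \at_{x}$ by \eqref{dfP}, hence $w\prec \at_{x}$. For $\at_{x}\are \at_{zz}$: fix $w\prec \at_{x}$. Then $w\prec aa$ and, by \eqref{T1}, $\at_{w}\are \at_{x}$, i.e. $Pwx$ by \eqref{dfP}. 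The second conjunct of the antecedent yields some $z\prec zz$ with $Owz$. Since $w\prec aa$ and $Owz$, \eqref{T2} gives $w\prec \at_{z}$. From \eqref{defatx}, $F_{\at_{z}}z$ with $\at_{z}\are aa$; together with $z\prec zz$ and $w\prec \at_{z}$, the definition \eqref{dfpl} of $\at_{zz}$ is satisfied with witnesses $z$ and $yy:=\at_{z}$, so $w\prec \at_{zz}$. (One must also check non-emptiness clauses in $\are$: $\at_{x}$ is non-empty by the remark following \eqref{defatx}, and once we know $\at_{zz}$ contains some atom it is non-empty too.)

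Having $\at_{x}\approx \at_{zz}$, I conclude as above: \eqref{defatx} gives $F_{\at_{x}}x$, so $F_{\at_{zz}}x$ by substitutivity of $\approx$, and then \eqref{AC2} delivers $F_{zz}x$, which is the consequent. The main obstacle I anticipate is the bookkeeping around the non-emptiness assumptions hidden in $\are$ (and, relatedly, making sure the comprehension instances \eqref{dfaa}, \eqref{dfpl} genuinely define the intended pluralities despite the absence of a full comprehension schema); the mereological content itself is entirely carried by \eqref{T1}, \eqref{T2}, \eqref{dfP}, and the uniqueness built into \eqref{AC1}/\eqref{defatx}, so once the inclusions are in place the final step via \eqref{AC2} is immediate.
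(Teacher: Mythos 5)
Your proposal is correct and follows essentially the same route as the paper's own proof: both reduce the claim to establishing $\at_{zz}\approx\at_{x}$ via \eqref{T1}, \eqref{T2}, \eqref{dfP} and the uniqueness clauses of \eqref{defatx}, and then conclude with $F_{\at_{x}}x$ and the `$\leftarrow$' direction of \eqref{AC2}. The only difference is presentational — you prove the inclusion $\at_{x}\are\at_{zz}$ directly where the paper argues by contradiction from $\at_{zz}\neq\at_{x}$ — and your explicit attention to the non-emptiness clauses in $\are$ is a point the paper glosses over.
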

\begin{proof}
We assume $\forall z (z\!\prec\! zz \to Pz x )\!\land\! \forall y(P y x
\to \exists z (z\!\prec\! zz \land  O z y ))$. From $\forall z
(z\!\prec\! zz \to Pz x )$ by \eqref{dfP} we obtain $\forall z
(z\!\prec\! zz \to \at_{z}\are \at_{x})$ thus by \eqref{dfpl} and
\eqref{defatx} we have $\at_{zz}\are \at_{x}$. We assume additionally
that $\at_{zz}\not =\at_{x}$, so $c\prec \at_{x}\land \neg c \prec
\at_{zz}$. From $c\prec \at_{x}$, \eqref{dfaa}, and \eqref{defatx} we
obtain $\at_{c}\are \at_{x}$, so $Pcx$ by \eqref{dfP}. Thus, from
assumption, we have $\exists z (z\!\prec\! zz \land  O z c)$. This with
$c\prec aa$, symmetry of $O$ and (t2) from earlier lemma yields $\exists
z (z \prec zz \land c\prec \at_{z})$, $\forall z (F_{\at_{z}}z)$ we have
from \eqref{defatx} thus $c\prec \at_{zz}$ by \eqref{dfpl} which is
false by assumption so $\at_{zz} =\at_{x}$. As always, we have
$F_{\at_{x}}x$ from \eqref{defatx}, so using $\at_{zz}
=\at_{x}$ and \eqref{defatx} we get $F_{\at_{zz}}x$ and so using `$\leftarrow$' of
\eqref{AC2} we finally obtain $F_{zz}x$ which we wanted to prove.
\end{proof}
\begin{lemma} \label{lemmaFUSU}
$ F_{zz}x \to \forall_{z\prec zz} Pz x \land
\forall y(P y x \to \exists_{z\prec zz}  O z y)$ is a
thesis of $\ATC+\eqref{dfP}$. \end{lemma}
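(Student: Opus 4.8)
The plan is to unpack $F_{zz}x$ using axiom \eqref{AC2} and the individual-atom operator $\at_{(\cdot)}$ from \eqref{defatx}, and then verify the two conjuncts of the sum-definition \eqref{defF} separately, leaning on the characterisation of $P$ via \eqref{dfP} together with the auxiliary theses \eqref{T1} and \eqref{T2} and the already-proved partial-order facts (Lemma~\ref{lemmaPO}).

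First I would assume $F_{zz}x$. Applying `$\to$' of \eqref{AC2} gives $F_{\at_{zz}}x$; combined with the uniqueness clause of \eqref{defatx} this forces $\at_{zz}\approx\at_{x}$, hence (by \eqref{dfapprox} and the definition of $\are$, noting non-emptiness from $\at_x\are aa$) $\at_{zz}\are\at_{x}$ and $\at_x\are\at_{zz}$. For the first conjunct $\forall_{z\prec zz}Pzx$: fix $z\prec zz$. By \eqref{dfpl} and \eqref{defatx} (using $F_{\at_z}z$) every atom of $z$ is an atom of $\at_{zz}$, i.e.\ $\at_z\are\at_{zz}$; composing with $\at_{zz}\are\at_x$ via transitivity of $\are$ yields $\at_z\are\at_x$, which is exactly $Pzx$ by \eqref{dfP}. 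For the second conjunct: fix $y$ with $Pyx$, i.e.\ $\at_y\are\at_x$. Since $\at_y$ is non-empty, pick an atom $u\prec\at_y$; then $u\prec\at_x=\at_{zz}$ (using $\at_x\are\at_{zz}$), so by \eqref{dfpl} there is some $z\prec zz$ and some $yy\are aa$ with $F_{yy}z$ and $u\prec yy$ — and by \eqref{dfaa} applied to $u\prec aa$ (which holds since $u\prec\at_y$) this forces the sole member of $yy$ to be $u$, giving $F_{\at_z}z$ with $u\prec\at_z$, hence $\at_z\are\at_x$... more directly, $u\prec\at_z$ and $u\prec\at_y$ mean both $z$ and $y$ contain the atom $u$, so $Puz$ and $Puy$ by \eqref{T1}, whence $Ozy$ by \eqref{defO} and transitivity of $P$ (Lemma~\ref{lemmaPO}). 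That supplies the witness $z\prec zz$ with $Ozy$.

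The main obstacle I anticipate is the bookkeeping around \eqref{dfpl}: membership in $\at_{zz}$ is defined through an existential over individuals $z\prec zz$ and pluralities $yy\are aa$ with $F_{yy}z$, and one has to carefully match this against the canonical plurality $\at_z$ furnished by \eqref{defatx}. The key move is recognising that an atom belongs to $\at_z$ precisely when $z$'s own composition-plurality contains it, and that \eqref{dfaa} collapses any atom's composition to the singleton of that atom, so the two descriptions coincide. Once that identification is made, the rest is routine chasing through $\are$, $\approx$, transitivity of $P$, and symmetry of $O$, all of which are available.
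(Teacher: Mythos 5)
Your argument is correct and follows essentially the same route as the paper's proof: unpack $F_{zz}x$ via \eqref{AC2} and the uniqueness clause of \eqref{defatx} to obtain $\at_{zz}\approx\at_{x}$, derive $\at_{z}\are\at_{zz}$ for each $z\prec zz$ from \eqref{dfpl} to settle the first conjunct, and exhibit a common atom of $y$ and of some $z\prec zz$ to witness overlap for the second conjunct --- the paper merely phrases both halves as reductios. One small repair: the step identifying the plurality $yy$ furnished by \eqref{dfpl} with $\at_{z}$ should cite the uniqueness clause $\forall_{yy\are aa}(F_{yy}z\leftrightarrow yy\approx\at_{z})$ of \eqref{defatx} rather than \eqref{dfaa}, which constrains only compositions \emph{of} an atom, not pluralities that contain one; your subsequent ``more directly'' route via \eqref{T1} is the right one.
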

\begin{proof}
We assume $F_{zz}x$ and proceed indirectly. From `$\to$' of \eqref{AC2}
we obtain  $F_{\at_{zz}}x$. Using \eqref{defatx} we get
$\forall_{yy}(F_{yy}x\to yy\approx \at_{x})$, so we take $\at_{zz}/yy$
and we obtain $\at_{x}\approx\at_{zz}$. If $\neg\forall z (z\prec zz \to
Pz x)$, then we take $c/z\colon c\prec zz \land \neg P cx$. From $c\prec
zz$ we obtain that $\at_{c}\are \at_{zz}$ by \eqref{dfpl} and
\eqref{defatx}. The latter with $\at_{x}\approx\at_{zz}$ yields
$\at_{c}\are \at_{x}$, i.e. $Pcx$, which is false by assumption. Now, if
$\neg \forall y(P y x \to \exists z (z\!\prec\! zz \land  O z y ))$,
then we take $c/y$: $P c x \land  \forall z (z\!\prec\! zz \to  \neg O z
c )$. From $P c x$ we have $\at_{c}\are \at_{x}$ and from \eqref{AC1} we have
that some object is in atoms of both $c$ and $x$: $d\prec\at_{c}\land
d\prec\at_{x}$. From $d\prec \at_{x}$ and $\at_{x}=\at_{zz}$ we
have $d\prec \at_{zz}$. From the latter and \eqref{dfpl} there is
$e\prec zz$ such that $d\prec \at_{e}$. From $\forall z (z\!\prec\! zz
\to  \neg O z c )$ with $z/e$ we get $e\prec zz \to  \neg O e c$, so
$\neg O e c$, i.e. $\forall y(Pye \to \neg Pyc)$ so using \eqref{dfP}
and $d/y$ we have $\at_{d}\are \at_{e}\to  \neg \at_{d}\are \at_{c}$. We
have $d\prec \at_{e}$ and $d\prec aa$, so by \eqref{defatx} we have
$\at_{d}\are \at_{e}$ and thus $\neg \at_{d}\are \at_{c}$ but we have
$d\prec \at_{c}$ so with $d\prec aa$ and \eqref{defatx}  yields
$\at_{d}\are \at_{c}$.
\end{proof}
\noindent Now we conservatively extend $\ATC$ by adding the predicate
for being an atom
\begin{equation}
\tag{$\mathtt{Df}.A_{F}$}\label{AF}Ax \leftrightarrow x\prec aa.
\end{equation}
It is clear that in $\ATC$, formula \eqref{TA2P} is a thesis
because of \eqref{AC1} and \eqref{dfaa}.
Moreover, having \eqref{defF} in $\ATC$ we can easily prove that
equivalence \eqref{aaP} is a thesis, and \eqref{atP} may be proved with
the use of \eqref{defatx}. Thus, using lemmas
\ref{lemmaPO}-\ref{lemmaFUSU} we obtain that $\mathsf{AEM}_{\pl}$ is a
subtheory of $\ATC$ conservatively extended by definitions:
\eqref{dfP} of being a part and \eqref{AF} of predicate $A$:
\begin{theorem}\label{AEMAC}
$\mathsf{AEM}_{\pl}+\eqref{aaP}+\eqref{atP}\subseteq\ATC+\eqref{dfP}+\eqref{AF}$.
\end{theorem}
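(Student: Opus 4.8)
The plan is to assemble Theorem~\ref{AEMAC} from the lemmas already proved, checking that each axiom and definition on the left-hand side is a thesis of $\ATC+\eqref{dfP}+\eqref{AF}$. Since the inclusion $\subseteq$ between (sets of theses of) theories is what we must establish, it suffices to verify that every generator of $\mathsf{AEM}_{\pl}+\eqref{aaP}+\eqref{atP}$, namely \eqref{ring}, \eqref{antising}, \eqref{transing}, \eqref{ssp}, \eqref{TA2P}, \eqref{dfAP}, \eqref{defF}, \eqref{aaP}, \eqref{atP}, is derivable on the right. Lemma~\ref{lemmaPO} already gives \eqref{ring}, \eqref{transing}, \eqref{antising}; the two subsequent lemmas give \eqref{ssp} and both directions of the biconditional \eqref{defF}. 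So the remaining work is \eqref{TA2P}, \eqref{dfAP}, \eqref{aaP} and \eqref{atP}, all of which reduce to unwinding \eqref{dfaa}, \eqref{dfpl}, \eqref{defatx}, and \eqref{AF}.

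First I would record that \eqref{dfAP} is immediate from \eqref{AF}, since $Ax$ is \emph{defined} as $x\prec aa$, so $Ax \leftrightarrow \forall y(Pyx\to x=y)$ amounts to proving $x\prec aa \leftrightarrow \forall y(Pyx\to x=y)$; the left-to-right direction uses that $z\prec aa$ forces $\at_z$ to have $z$ as its only element (as in the proof of \eqref{T2}), hence $Pyx$ with $x\prec aa$ gives $\at_y\are\at_x=\{x\}$, so $y=x$; the converse uses \eqref{AC1} to extract an atom $d\prec\at_x$, which is a part of $x$ and hence equals $x$, whence $x\prec aa$. Next, \eqref{TA2P}, i.e. $\forall x\exists y(Pyx\land Ay)$: from \eqref{AC1}/\eqref{defatx} the plurality $\at_x$ is non-empty, so fix $d\prec\at_x$; then $d\prec aa$ gives $Ad$ by \eqref{AF}, and $d\prec aa$ together with $d\prec\at_x$ gives $\at_d\are\at_x$ by \eqref{T1}, i.e. $Pdx$ by \eqref{dfP}. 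Then \eqref{aaP}, $x\prec aa\leftrightarrow Ax$, is literally \eqref{AF}. Finally \eqref{atP}, $y\prec\at_x\leftrightarrow Pyx\land y\prec aa$: the right-to-left direction is \eqref{T1} plus \eqref{AF} (if $y\prec aa$ and $Pyx$ then $\at_y\are\at_x$, and the sole element $y$ of $\at_y$ lies in $\at_x$), and the left-to-right direction follows because $y\prec\at_x$ forces $y\prec aa$ by \eqref{dfpl}/\eqref{dfaa} (the atoms collected in $\at_x$ are exactly atoms) and then $Pyx$ comes from \eqref{T1} again, or directly: $\at_y=\{y\}\subseteq\at_x$.

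I expect the only genuinely delicate point to be \eqref{atP} in the left-to-right direction, specifically arguing that membership in $\at_x$ entails being an atom: by \eqref{dfpl}, $y\prec\at_x$ means $y\prec yy$ for some $yy\are aa$ with $F_{yy}x'$ for some individual $x'\prec$ (the singleton) $x$, and one must check that the $yy$ appearing there really is a plurality of atoms in the sense of \eqref{dfaa}; this is exactly what \eqref{defatx} packages, since $\at_{x'}\are aa$, but care is needed to line up the existential witness in \eqref{dfpl} with the unique plurality named by $\at_{x'}$ via the uniqueness clause of \eqref{AC1}. Once that identification is made, all four remaining generators fall out, and combined with Lemmas~\ref{lemmaPO}--\ref{lemmaFUSU} we conclude that every thesis of $\mathsf{AEM}_{\pl}+\eqref{aaP}+\eqref{atP}$ is a thesis of $\ATC+\eqref{dfP}+\eqref{AF}$, which is the claimed inclusion.
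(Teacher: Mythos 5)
Your proposal is correct and follows essentially the same route as the paper: the paper's own argument is just the short paragraph preceding the theorem, which likewise assembles Lemmas~\ref{lemmaPO}--\ref{lemmaFUSU} and then checks the remaining generators \eqref{TA2P}, \eqref{dfAP}, \eqref{aaP}, \eqref{atP} directly from \eqref{AC1}, \eqref{dfaa}, \eqref{defatx}, and \eqref{AF}; you merely supply the verifications the paper leaves implicit. The one point you flag as delicate is in fact immediate: since the subscript in \eqref{atP} is an individual variable, $y\prec\at_{x}$ falls under \eqref{defatx} rather than \eqref{dfpl}, and the clause $\at_{x}\are aa$ there gives $y\prec aa$ at once.
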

Now we are going to prove converse dependency. First, we show that the
mereological sum in $\mathsf{AEM}_{\pl}$ is extensional
\begin{lemma}\label{lemextF}
$\forall xx \forall yy\forall z(xx\approx yy\land F_{xx}z\to F_{yy}z)$ is provable in $\mathsf{AEM}_{\pl}$.
\end{lemma}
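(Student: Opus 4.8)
The plan is to unfold the definition of mereological sum \eqref{defF} for both $xx$ and $yy$, and show that the two conjuncts transfer from $xx$ to $yy$ using the hypothesis $xx\approx yy$, i.e. $\forall z(z\prec xx\leftrightarrow z\prec yy)$ from \eqref{dfapprox}. Since we are working in $\mathsf{AEM}_{\pl}$, this is essentially a matter of replacing a plural variable by a coextensive one inside the matrix of \eqref{defF}, and the only subtlety is to check that $\mathsf{AEM}_{\pl}$ really does license this — which it does, because $zz$ occurs in \eqref{defF} only through the bounded quantifiers $\forall_{z\prec zz}$ and $\exists_{z\prec zz}$, and these are insensitive to swapping $xx$ for a coextensive $yy$.

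Concretely, I would assume $xx\approx yy$ and $F_{xx}z$. By \eqref{defF} applied to $F_{xx}z$ we get (i) $\forall_{z'\prec xx}Pz'z$ and (ii) $\forall y(Pyz\to\exists_{z'\prec xx}Oz'y)$. For the first conjunct of $F_{yy}z$: fix $z'\prec yy$; by $xx\approx yy$ we have $z'\prec xx$, so (i) gives $Pz'z$; hence $\forall_{z'\prec yy}Pz'z$. For the second conjunct: fix $y$ with $Pyz$; by (ii) there is $z'\prec xx$ with $Oz'y$; by $xx\approx yy$ that same $z'$ satisfies $z'\prec yy$, so $\exists_{z'\prec yy}Oz'y$; hence $\forall y(Pyz\to\exists_{z'\prec yy}Oz'y)$. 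Combining, \eqref{defF} yields $F_{yy}z$, which is what we wanted.

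There is essentially no hard part here: the lemma is a purely logical consequence of the shape of \eqref{defF} together with the meaning of $\approx$, and it does not even invoke the mereological axioms \eqref{ring}--\eqref{ssp} — the label ``in $\mathsf{AEM}_{\pl}$'' is just the ambient theory. If anything, the only point deserving a word of care is that the definition \eqref{defF} is being treated as a genuine biconditional available in $\mathsf{AEM}_{\pl}$ (which it is, since $\mathsf{AEM}_{\pl}$ includes \eqref{defF}), so that from the right-hand side instantiated at $yy$ we may infer $F_{yy}z$. The result will be used downstream to move freely between coextensive plural witnesses of a sum, which is exactly what is needed to match the uniqueness-up-to-$\approx$ clause built into \eqref{AC1}.
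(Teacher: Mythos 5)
Your proof is correct and matches the paper's in substance: both arguments simply unfold \eqref{defF} and transfer each conjunct across the coextensive plurals via \eqref{dfapprox}, the only cosmetic difference being that the paper argues indirectly (assuming $\neg F_{yy}x$ and deriving a contradiction) while you argue directly. Your closing observation that only $\approx$ and logic are needed is exactly the remark the paper makes immediately after its proof.
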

\begin{proof}
We assume $F_{zz}x\land zz\approx yy$ and $\neg F_{yy}x$
We obtain (a): $\forall z (z\prec zz \to Pz x )$ (b): $\forall y(P y x
\to \exists z (z\prec zz \land O z y ))$, (c): $\neg \forall z (z\prec
yy \to Pz x )\lor \neg \forall y(P y x \to \exists z (z\prec yy \land  O
z y ))$. If $\neg \forall z (z\prec yy \to Pz x )$ then we take
$c/z\colon c\prec yy \land \neg Pcx$, so $c\prec zz$ by $zz\approx yy$
but then using (a) we have $Pcx$ which is false. Therefore, we have
$\forall z (z\prec yy \to Pz x )$ so using (c) we obtain $\neg \forall
y(P y x \to \exists z (z\prec yy \land  O z y ))$ we take $d/y\colon Pdx
\land \forall z (z\prec yy \to \neg O z d)$. From $Pdx$ and (b) we have
that $\exists z (z\prec zz \land  O z d )$ so we take $e/z\colon$
$e\prec zz \land Oed$. From  $e\prec zz$ and  $zz\approx yy$ we have
$e\prec yy$ thus using $\forall z (z\prec yy \to \neg O z d)$ we obtain $\neg Oed$
which yields a contradiction.
\end{proof}
As we see, all we needed was $\approx$ and logical axioms.
Now we show that in $\mathsf{AEM}_{\pl}$ with appropriate definitions everything is a unique
composition of unique atoms:
\begin{lemma}\label{6}
\eqref{AC1} is provable in $\mathsf{AEM}_{\pl}+\eqref{aaP}+\eqref{atP}$.
\end{lemma}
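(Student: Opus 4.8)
The plan is to verify directly that each of the four conjuncts inside \eqref{AC1} is provable in $\mathsf{AEM}_{\pl}+\eqref{aaP}+\eqref{atP}$, after instantiating the outer $\exists_{zz\are aa}$ with the plurality $\at_{x}$ of atomic parts of $x$. First I would fix an arbitrary individual $x$ and note that, by \eqref{TA2P}, $x$ has at least one atomic part, so $\at_{x}$ is non-empty and $\at_{x}\are aa$ holds by \eqref{atP} and \eqref{aaP}. This supplies the witness for the plural quantifier.

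Next I would establish $F_{\at_{x}}x$. This is exactly the content of Varzi's derivation already recalled in Section~\ref{1}: from \eqref{ring}, \eqref{transing} and \eqref{TA2P} one gets $\forall_{y\prec\at_{x}}Pyx\land\forall y(Pyx\to\exists_{z\prec\at_{x}}Oyz)$, which by \eqref{defF} is $F_{\at_{x}}x$. So this conjunct comes for free from the earlier discussion together with the definitions \eqref{aaP}, \eqref{atP}. Then I would prove the uniqueness-of-the-plural conjunct $\forall_{yy\are aa}(F_{yy}x\leftrightarrow \at_{x}\approx yy)$. For the direction $\at_{x}\approx yy\to F_{yy}x$, I invoke Lemma~\ref{lemextF} (extensionality of $F$ under $\approx$) with $F_{\at_{x}}x$. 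For the converse, suppose $yy\are aa$ and $F_{yy}x$; I must show every atom is one of $yy$ iff it is in $\at_{x}$. If $z\prec yy$ then $z\prec aa$ and, by \eqref{defF}, $Pzx$, hence $z\prec\at_{x}$ by \eqref{atP}. Conversely if $z\prec\at_{x}$ then $Pzx$ and $z\prec aa$; by \eqref{defF} applied to $F_{yy}x$ there is $w\prec yy$ with $Owz$, and since $z$ is an atom overlapping $z$ forces (using \eqref{ssp}/\eqref{dfAP} reasoning, as in \eqref{T2}) $Pzw$; as $w$ is also an atom, $z=w$, so $z\prec yy$. That gives $\at_{x}\approx yy$.

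Finally I would handle the individual-uniqueness conjunct $\forall y(F_{\at_{x}}y\to x=y)$. Suppose $F_{\at_{x}}y$. By \eqref{defF} every atom in $\at_{x}$ is a part of $y$, and every part of $y$ overlaps some atom of $x$; the same holds for $x$ since $F_{\at_{x}}x$. Then I would show $Pxy$ and $Pyx$ via \eqref{ssp}: to get $Pyx$, take any $z$ with $Pzy$; it overlaps some atom $w\in\at_{x}$, and $w\le x$, so $z$ overlaps $x$, hence by \eqref{ssp} $Pyx$; symmetrically $Pxy$; then \eqref{antising} gives $x=y$. Assembling the four conjuncts under the witness $\at_{x}$ yields \eqref{AC1}.

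I expect the main obstacle to be the atom-overlap bookkeeping in the two uniqueness arguments: concluding from "$z$ is an atom and $Ozw$" that "$Pzw$", and then "$z=w$" when $w$ is also an atom. This is not a new idea — it is precisely the pattern of \eqref{T2} and the definition \eqref{dfAP} — but it must be spelled out carefully from \eqref{ssp} and \eqref{defO} rather than waved at, since here we are reasoning inside $\mathsf{AEM}_{\pl}$ and cannot yet appeal to the $\ATC$-side machinery. Everything else is routine propagation of the definitions \eqref{aaP}, \eqref{atP}, \eqref{defF} together with Lemma~\ref{lemextF}.
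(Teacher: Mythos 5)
Your proposal is correct and follows essentially the same route as the paper's proof: it witnesses the plural quantifier with $\at_{x}$, gets $F_{\at_{x}}x$ and $\at_{x}\are aa$ from Varzi's analysis, derives one direction of the biconditional from Lemma~\ref{lemextF} and the other by the atom-overlap-implies-identity argument, and obtains $\forall y(F_{\at_{x}}y\to x=y)$ from uniqueness of sums (which you spell out via \eqref{ssp} and \eqref{antising} where the paper simply cites it from Section~\ref{1}). The only cosmetic difference is that you prove $yy\approx\at_{x}$ by two direct inclusions rather than the paper's case split by contradiction.
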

\begin{proof}
From Varzi’s analysis we have \eqref{TA1F} i.e. $F_{\at_{x}}x$ and
$\at_{x}\are aa$. The uniqueness of a mereological sum holds in extensional
mereology, as we note in the section \ref{1}, thus $\forall y
(F_{\at_{x}}y\to x=y)$. From $F_{\at_{x}}x$ and lemma \ref{lemextF} we obtain $\forall_{yy\are
aa}(yy\approx\at_{x} \to F_{yy}x)$, thus all we need to show is $\forall_{yy\are
aa}(F_{yy}x\to yy\approx\at_{x})$. We assume indirectly that $\neg \forall_{yy\are aa}(F_{yy}x\to yy\approx\at_{x})$
take $cc/yy$ and obtain $cc\are aa \land F_{cc}x\land cc\not = \at_{x}$.
 From $cc\not = \at_{x}$ and \eqref{dfapprox} we have $\exists z (z\prec
cc \land \neg z\prec \at_{x}\lor \neg z\prec cc \land z\prec \at_{x})$.
We take $c/z$ and we have two possibilities (a): $c\prec cc \land \neg
c\prec \at_{x}$ or (b): $\neg c\prec cc \land c\prec \at_{x}$. We start
with (a). From  $F_{cc}x$ and the definition of mereological sum
\eqref{defF} we obtain $\forall z (z\prec cc\to Pzx)$ thus using $c\prec
cc$ we have $Pcx$. From $F_{\at_{x}}x$ and \eqref{defF} we have $\forall
y(P y x \to \exists z (z\prec \at_{x} \land  O z y )$ so using $Pcx$ we
have $\exists z (z\prec \at_{x} \land  O z c )$ we take $d/z:$ $d\prec
\at_{x} \land Odc$. Both $c$ and $d$ are atoms, so from $Odc$ with the
use of \eqref{dfaa} we have $c=d$. Thus from $d\prec \at_{x}$ we obtain
$c\prec \at_{x}$ which yields a contradiction. In the case of (b) from
$c\prec \at_{x}$ and \eqref{dfatP} we obtain $Pcx$. Next from $F_{cc}x$
and definition of sum \eqref{defF} we obtain $\forall y (Pyx\to \exists
z (z\prec cc\land Ozy))$. We take $c/z$ and using $Pcx$ we obtain
$\exists z (z\prec cc\land Ozc)$ We take $d/z$ and so $d\prec cc\land
Odc$. We know that $c$ is an atom because of $c\prec \at_{x}$, and $d$
is an atom because of $d\prec cc$ and $cc\are aa$, so $Ocd$ yields $c=d$
and this with $d\prec cc$ yields $c\prec cc$ which yields a contradiction
with assumption.
\end{proof}
To prove \eqref{AC2} in $\mathsf{AEM}_{\pl}$ we need to introduce the
notion of atoms of $\pmb{xx}$'s and we take the following instantiation
of comprehension schema
\begin{equation}
\tag{$\mathtt{Df}.\at_{P}^{\pl}$} \label{dfatP} y\prec \at_{xx}\leftrightarrow
\exists_{z\are xx}(P y z \land y\prec aa).
\end{equation}
Now we prove that, in atomistic extensional mereology with plural
quantification, being the mereological sum of some objects is equivalent
with mereological sum of atoms of these objects. Implication
`$\leftarrow$' requires the use of \eqref{ssp}.
\begin{lemma}
$F_{\at_{zz}}x\to F_{zz}x$ is provable in
$\mathsf{AEM}_{\pl}+\eqref{aaP}+\eqref{atP}+\eqref{dfatP}$.
\end{lemma}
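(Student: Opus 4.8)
The plan is to unpack the hypothesis $F_{\at_{zz}}x$ using the definition \eqref{defF} of mereological sum into its two conjuncts — call them (i) $\forall_{u\prec\at_{zz}}Pux$ and (ii) $\forall y(Pyx\to\exists_{u\prec\at_{zz}}Ouy)$ — and then to establish the two conjuncts of $F_{zz}x$ separately. First I would note that the case in which $\at_{zz}$ is empty is harmless: then (ii) together with the reflexivity of $P$ (instance $Pxx$) already fails, so $F_{\at_{zz}}x$ is false and the implication holds vacuously; hence we may assume $\at_{zz}$ is non-empty.

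For the second conjunct of $F_{zz}x$, i.e. $\forall y(Pyx\to\exists_{z\prec zz}Ozy)$, fix $y$ with $Pyx$. By (ii) there is $u\prec\at_{zz}$ with $Ouy$, and by the `$\to$' direction of \eqref{dfatP} this $u$ is a part of some $z\prec zz$. Since overlapping is monotone under \eqref{transing} — a common part of $u$ and $y$, as furnished by \eqref{defO}, is also a common part of $z$ and $y$ — we obtain $Ozy$, hence $\exists_{z\prec zz}Ozy$. This part is routine and uses only transitivity and the definition of $O$.

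The first conjunct, $\forall_{z\prec zz}Pzx$, is where the strong supplementation principle (and atomism) is genuinely needed, and I expect it to be the main obstacle — indeed it is exactly the interaction flagged just before the statement. Fix $z\prec zz$ and suppose $\neg Pzx$. By \eqref{ssp} there is $w$ with $Pwz$ and $\neg Owx$. By atomism \eqref{TA2P} pick an atom $a$ with $Paw$; by \eqref{transing} also $Paz$, and by \eqref{aaP} $a\prec aa$. Then $z\prec zz$, $Paz$ and $a\prec aa$ give $a\prec\at_{zz}$ by the `$\leftarrow$' direction of \eqref{dfatP}, so by (i) $Pax$. But now $a$ is a common part of $w$ and $x$, whence $Owx$ by \eqref{defO}, contradicting $\neg Owx$; therefore $Pzx$. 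Combining the two conjuncts and applying \eqref{defF} yields $F_{zz}x$. The delicate step is precisely this use of \eqref{ssp}: without it one cannot pass from ``every atomic part of $z$ lies under $x$'' to ``$z$ itself lies under $x$'', which is what the supplementation argument accomplishes.
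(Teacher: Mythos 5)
Your proof is correct and follows essentially the same route as the paper's: the crucial step in both is to use \eqref{ssp} together with \eqref{TA2P} to extract an atom below the supplement of $z$, place it in $\at_{zz}$ via \eqref{dfatP}, and derive the contradiction $Owx$ from clause (i). The only cosmetic difference is in the second conjunct, where you push a common part of $u$ and $y$ up through $Puz$ by transitivity, while the paper instead uses the atomicity of the witness to turn overlap into parthood; both variants are routine.
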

\begin{proof}
We assume $F_{\at_{zz}}x$ and indirectly we assume $\neg F_{zz}x$. We
obtain the following
(a): $\forall z (z\prec \at_{zz} \to Pzx)$, (b): $\forall y (Pyx\to
\exists z (z\prec \at_{zz} \land  Ozy))$
and (c): $\neg \forall z (z\prec zz\to Pzx ) \lor \neg \forall y (Pyx\to
\exists z (z\prec zz \land Ozy))$. If it were the case that $\neg \forall z
(z\prec zz\to Pzx )$ then we take $c/z$ and we have $c\prec zz\land \neg
Pcx$. From $\neg Pcx$ and \eqref{ssp} we obtain that there is $d$ such
that $Pd c\land \neg Odx$. From \eqref{TA2P} there is an atom of $d$:
$Ae\land Ped$. From the latter and $Pdc$ using \eqref{transing} we have
$Pec$. So, we have $c\prec zz \land Pec\land Ae$ and thus using
\eqref{dfatP} we obtain $e\prec\at_{zz}$. The latter with (a) yields
$Pex$, which is false because we have $Ped \land \neg Odx $. So $\forall
z (z\prec zz\to Pzx )$ and next by (c): we obtain $\neg \forall y
(Pyx\to \exists z (z\prec zz \land Ozy))$. We take $f/y$ and so
$Pfx\land \forall z (z\prec zz \to \neg Ozf)$. From $Pfx$ and (b) we
have $\exists z(z\prec \at_{zz}\land Ozf)$ and we obtain $g/z: g\prec
\at_{zz}\land Ofg$. From $g\prec \at_{zz}$ and \eqref{atP} we obtain that
there is $h\prec zz$ such that $Pgh\land Ag$. We have $Ofg$ so with $Ag$
using definitions \eqref{dfAP} and \eqref{defO} we obtain $Pgf$. The
latter, combined with $Pgh$ which was obtained earlier yields $Ohf$. We have $h\prec zz$ so
using $\forall z (z\prec zz \to \neg Ozf)$ we obtain $\neg Ohf$ which
yields a contradiction.
\end{proof}
\begin{lemma}\label{fuzzatzz}
$F_{zz}x\to F_{\at_{zz}}x$ is provable in
$\mathsf{AEM}_{\pl}+\eqref{aaP}+\eqref{atP}+\eqref{dfatP}$.
\end{lemma}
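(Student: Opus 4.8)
The plan is to reduce the claim to the definition \eqref{defF} of mereological sum applied to the plurality $\at_{zz}$: assuming $F_{zz}x$, I would establish the two conjuncts of $F_{\at_{zz}}x$ separately, namely (i) $\forall u(u\prec\at_{zz}\to Pux)$ and (ii) $\forall y(Pyx\to\exists z(z\prec\at_{zz}\land Ozy))$. Throughout I would keep the hypothesis $F_{zz}x$ unfolded via \eqref{defF} as well, so that it supplies both $\forall_{v\prec zz}Pvx$ and, for every part of $x$, some element of $zz$ overlapping it.

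For (i), the ``covering'' half, I would fix $u\prec\at_{zz}$, read off from \eqref{dfatP} (together with \eqref{aaP}) an individual $v\prec zz$ with $Puv$ and $u$ an atom, obtain $Pvx$ from $F_{zz}x$ and \eqref{defF}, and conclude $Pux$ by transitivity \eqref{transing}; this part is purely mechanical. For (ii), the ``exhaustiveness'' half, I would fix $y$ with $Pyx$ and invoke atomism \eqref{TA2P} to produce an atom $e$ with $Pey$, whence $Pex$ by \eqref{transing}. Then $F_{zz}x$ and \eqref{defF} give some $w\prec zz$ with $Owe$; unfolding \eqref{defO} yields a $c$ with $Pcw$ and $Pce$, and since $e$ is an atom, \eqref{dfAP} forces $c=e$, hence $Pew$. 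Now \eqref{aaP} and \eqref{dfatP} place $e$ in $\at_{zz}$, while \eqref{ring} gives $Pee$, so together with $Pey$ we get $Oey$; thus $e$ is the required witness for (ii).

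I do not expect a genuine obstacle here. The only non-bookkeeping moves are the use of atomism \eqref{TA2P} to extract, from an arbitrary part $y$ of $x$, an atomic part that can then be relocated inside $\at_{zz}$, and the passage $Owe\Rightarrow Pew$, which is just the observation that an atom's only part is itself. In particular, unlike the converse implication of the preceding lemma, this direction needs neither extensionality lemmas nor the strong supplementation principle \eqref{ssp}: it uses only \eqref{ring}, \eqref{transing}, \eqref{TA2P}, and the definitions \eqref{dfAP}, \eqref{aaP}, \eqref{dfatP}. No degenerate case for empty $zz$ arises, since $F_{zz}x$ together with $Pxx$ already forces $zz$ to be non-empty.
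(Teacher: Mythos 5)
Your proof is correct and follows essentially the same route as the paper's: both verify the two clauses of \eqref{defF} for $\at_{zz}$, using \eqref{dfatP} plus transitivity for the covering clause and \eqref{TA2P} plus the fact that an atom's only part is itself for the exhaustiveness clause. The only differences are immaterial: the paper argues indirectly where you argue directly, and in the exhaustiveness clause the paper first applies the second conjunct of $F_{zz}x$ to $y$ and then atomizes the resulting overlap, whereas you first atomize $y$ and then apply that conjunct to the atom.
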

\begin{proof}
We proceed indirectly. We assume $F_{zz}x$ and $\neg F_{\at_{zz}}x$ and
we obtain  (a): $\forall z (z\prec zz \to Pzx)$, (b): $\forall y (Pyx
\to \exists z (z\prec zz \land O zy))$ and also we have (c): $\neg
\forall z (z\prec \at_{zz}\to Pzx ) \lor \neg \forall y (Pyx\to \exists
z (z\prec \at_{zz} Ozy))$. If it were the case that $\neg \forall z (z\prec
\at_{zz}\to Pzx )$ then we take $c/x$ and so $c\prec \at_{zz}\land \neg 
Pcx$. From $c\prec \at_{zz}$ and definition of \eqref{dfatP} we have
that there is $d\prec zz$ with  $Pcd \land Ac$. From (a) and $d\prec zz$
we obtain $Pdx$, so using $Pcd$ and \eqref{transing} we obtain $Pcx$,
which is false, so $\forall z (z\prec \at_{zz}\to Pzx)$ and next by (c)
$\neg \forall y (Pyx\to \exists z (z\prec \at_{zz} \land Ozx))$. We take
$e/y\colon Pex\land \forall z (z\prec \at_{zz} \to \neg Oze)$. From
$Pex$ and (b) we obtain $\exists z (z\prec zz \land O ze)$ We take $f/z$
and so $f\prec zz \land O fe$. From $O fe$ we obtain $Pge\land Pgf$.
 From \eqref{TA2P} we obtain that
there is an atom of $g$: $Ah \land Phg$ from \eqref{transing} we obtain
$Phe\land Phf$. We have $Phf\land f\prec zz \land Ah$ and by using
\eqref{dfatP} we get $h\prec \at_{zz}$. The latter with $\forall z
(z\prec \at_{zz} \to \neg Oze)$ yields $\neg Ohe$, which yields a
contradiction with $Phe$.
\end{proof}
To end the proof we need to note a few more things. The notions of being
a part \eqref{dfP}, atoms \eqref{dfaa}, and atoms of: \eqref{defatx} and \eqref{dfpl} in
$\ATC$ are theses of $\mathsf{AEM}_{pl}+\eqref{dfatP}$. \eqref{dfP} in
one direction follows just from \eqref{transing} and conversely we
have to use \eqref{ssp} and \eqref{TA2P}. \eqref{dfaa} follows from
the defined mereological sum \eqref{defF}, being an atom \eqref{aaP} and the fact $\forall
x (F_{\at_{x}}x)$ proved by Varzi. We have proved \eqref{defatx} in
lemma \ref{6}, and \eqref{dfpl} can be easily proved with the use of
the definition of atom \eqref{dfAP}, atoms of \eqref{dfatP}, mereological sum \eqref{defF} and the fact that in
$\mathsf{AEM_{\pl}}+\eqref{aaP}+\eqref{atP}$ we have $\forall x (F_{\at_{x}}x)$. Lastly, we note
\eqref{dfatP} is a thesis of $\ATC$, and it can be proved with
the use of \eqref{defatx}, \eqref{defF}, \eqref{dfpl}, and \eqref{dfP}.
Thus, using lemmas \ref{lemextF}-\ref{fuzzatzz} we obtain that
axiomatic theory of atomism $\ATC$ presented herein, when extended by appropriate
definitions is a subtheory of atomistic extensional mereology:
\begin{theorem}\label{ACAEM}
$\ATC+\eqref{dfP}+\eqref{AF}\subseteq \mathsf{AEM}_{\pl}+\eqref{aaP}+\eqref{atP}+\eqref{dfatP}$.
\end{theorem}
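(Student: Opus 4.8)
The plan is to prove the inclusion by walking through the non-logical axioms and definitions that constitute $\ATC+\eqref{dfP}+\eqref{AF}$ and checking, one at a time, that each is a thesis of $\mathsf{AEM}_{\pl}+\eqref{aaP}+\eqref{atP}+\eqref{dfatP}$; since both theories sit over the same two-sorted logic with identity, nothing more is required. The constituents to handle are the two specific axioms \eqref{AC1} and \eqref{AC2}, the comprehension instances \eqref{dfaa} and \eqref{dfpl}, the derived characterization \eqref{defatx} of the operator $\at_{x}$, the definition \eqref{dfP} of $P$, and the definition \eqref{AF} of $A$. On the right-hand side we may already freely use the defined sum \eqref{defF}, Varzi's fact $\forall x(F_{\at_{x}}x)$ together with $\at_{x}\are aa$ (that is, \eqref{TA1F}), uniqueness of the mereological sum in $\mathsf{EM}_{\pl}$, and extensionality of the sum from Lemma~\ref{lemextF}; so, thanks to Lemmas~\ref{lemextF}–\ref{fuzzatzz}, most of the content is already in hand and what remains is assembly plus a couple of short derivations.

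Concretely: \eqref{AF}, namely $Ax\leftrightarrow x\prec aa$, is simply the converse reading of \eqref{aaP}. The axiom \eqref{AC1} is Lemma~\ref{6}. The biconditional \eqref{AC2}, $F_{zz}x\leftrightarrow F_{\at_{zz}}x$, is the conjunction of Lemma~\ref{fuzzatzz} with the lemma immediately preceding it (which supplies $F_{\at_{zz}}x\to F_{zz}x$ via \eqref{ssp}). The characterizing formula \eqref{defatx} of the individual operator $\at_{x}$ is read off from the proof of Lemma~\ref{6}: the clauses $F_{\at_{x}}x$ and $\at_{x}\are aa$ are \eqref{TA1F}, the clause $\forall y(F_{\at_{x}}y\to x=y)$ is uniqueness of the mereological sum, and $\forall_{yy\are aa}(F_{yy}x\leftrightarrow yy\approx\at_{x})$ combines Lemma~\ref{lemextF} with the uniqueness argument carried out inside Lemma~\ref{6}. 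For the two comprehension instances, \eqref{dfaa} follows by unfolding the defined sum \eqref{defF}, the definition \eqref{aaP} of atomhood, and $\forall x(F_{\at_{x}}x)$, while \eqref{dfpl} follows similarly from \eqref{dfAP}, \eqref{dfatP}, \eqref{defF} and $\forall x(F_{\at_{x}}x)$.

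The one step that is more than bookkeeping is \eqref{dfP}, $Pxy\leftrightarrow\at_{x}\are\at_{y}$. Left to right: if $Pxy$ and $u\prec\at_{x}$, then $Pux$ by \eqref{atP}, hence $Puy$ by \eqref{transing}, hence $u\prec\at_{y}$ by \eqref{atP}; since $\at_{x}$ is non-empty — witnessed, via \eqref{TA2P} and \eqref{aaP}, by any atomic part of $x$ — this yields $\at_{x}\are\at_{y}$. Right to left is where \eqref{ssp} and \eqref{TA2P} enter: assume $\neg Pxy$; by \eqref{ssp} take $z$ with $Pzx$ and $\neg Ozy$; by \eqref{TA2P} pick an atomic part $w$ of $z$, so $Pwx$ by \eqref{transing} and $w\prec\at_{x}$ by \eqref{atP}; by hypothesis $w\prec\at_{y}$, hence $Pwy$, and with the reflexive overlap of $w$ this gives $Owy$; but $Pwz$ then propagates the overlap to $Ozy$, a contradiction. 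So $\at_{x}\are\at_{y}\to Pxy$.

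The main obstacle I anticipate is not any single derivation but guaranteeing that the quotation of Lemmas~\ref{6}–\ref{fuzzatzz} is legitimate: these lemmas speak of $\at_{x}$ and $\at_{zz}$ as the plurality defined by \eqref{atP}/\eqref{dfatP}, whereas $\ATC$ treats $\at_{x}$ as the unique witness of \eqref{defatx}, so the proof must first secure \eqref{defatx} as a thesis in order that the two readings coincide — and that identification is exactly what rests on uniqueness of mereological sums (extensionality of $\mathsf{EM}_{\pl}$ together with Lemma~\ref{lemextF}) and on \eqref{TA1F}. Once this is in place, collecting \eqref{AF}, \eqref{AC1}, \eqref{AC2}, \eqref{dfaa}, \eqref{dfpl}, \eqref{defatx} and \eqref{dfP} gives the asserted inclusion; together with the converse inclusion of Theorem~\ref{AEMAC} (and the observation that \eqref{dfatP} is in turn a thesis of $\ATC$, provable from \eqref{defatx}, \eqref{defF}, \eqref{dfpl}, \eqref{dfP}) this establishes the definitional equivalence of $\ATC$ and $\mathsf{AEM}_{\pl}$.
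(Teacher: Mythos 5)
Your proposal is correct and follows essentially the same route as the paper: it assembles Lemmas~\ref{lemextF}--\ref{fuzzatzz} for \eqref{AC1} and \eqref{AC2}, derives \eqref{defatx} from Lemma~\ref{6} together with \eqref{TA1F} and uniqueness of sums, obtains \eqref{dfaa}, \eqref{dfpl} and \eqref{AF} by unfolding the definitions, and proves \eqref{dfP} left-to-right from \eqref{transing} and right-to-left from \eqref{ssp} with \eqref{TA2P}. Your explicit working of the \eqref{dfP} biconditional and your remark on reconciling the two readings of $\at_{x}$ are in fact more detailed than the paper's own sketch.
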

Using theorems \ref{AEMAC} and \ref{ACAEM} we obtain the final result of the
work that out theory and atomistic extensional mereology extended with appropriate instantiations of comprehension schema are definitionally equivalent
$$\ATC+\eqref{dfP}+\eqref{AF}=\mathsf{AEM}_{\pl}+\eqref{aaP}+\eqref{atP}+\eqref{dfatP}.$$
As we mentioned, in extensional mereology fusion \eqref{deffu} and sum \eqref{defF} are equivalent. So $\ATC+\eqref{dfP}+\eqref{AF}$ is also equivalent to $\mathsf{AEM}_{\pl}+\eqref{aaP}+\eqref{atP}+\eqref{dfatP}$ with fusion instead of sum. 

Perhaps some atomists would prefer to stay in first-order logic without plural quantification.
However, plural quantification in the case of atomism has its
advantages. First, it is open to the axiomatic characterization of superatomism
\cite{Di20}. Second, as we have shown, atomism does not need to be based
on the primitive notion of being a part. Finally, we believe that it captures both
formally and intuitively what \eqref{TA1} is meant to express.

In conclusion, we also wish to emphasize that though our axiomatic system $\ATC$ for atomism is equivalent to atomistic extensional mereology with plural quantification, $\ATC$ it is based on two axioms that cannot be proven in any formulation of \textit{pure} extensional mereology. However, general exstensonal mereology can also be axiomatized, in plural logic, using fusion as the primitive concept \cite{VaLy22}.
%
\begin{small}

\end{small}

\begin{thebibliography}{99}
\bibitem{CaLa21} Carrara, M., Lando, G. (2021) S.I.: ``Mereology and Identity'', \textit{Synthese}, 198 (18): 4205–4227
\bibitem{CaVa99} Casati, R., Varzi, A. (1999) \textit{Parts and
Places. The Structures of Spatial Representation}, MIT Press, Cambridge
(MA), 1999
\bibitem{CoVa21} Cotnoir, A., Varzi, A. (2021) \textit{Mereology},
Oxford University Press
\bibitem{Co21} Cotnoir A. (2021) ``Is Weak Supplementation Analytic?'', \textit{Synthese}, 198(18): 4229-4245
\bibitem{CoVa18} Cotnoir, A., Varzi, C. A. (2018) ``Natural Axioms
for Classical Mereology'', \textit{Review of Symbolic Logic}, 12(1)
201–208
\bibitem{Co13} Cotnoir, A. (2013) ``Beyond Atomism'',
\textit{Thought}, 2(1): 67–72
\bibitem{Di20} Dixon, T. (2020) ``Between Atomism and Superatomism'',
\textit{Journal of Philosophical Logic}, 49(6): 1215–1241
\bibitem{Fi10} Fine, K. (2010) ``Towards a Theory of Part'', \textit{Journal of Philosophy}, 107(11):  559–589
\bibitem{FlLi21} Florio, S. Linnebo, Ø. (2021) \textit{The Many and
the One: A Philosophical Study of Plural Logic}, Oxford: Oxford University Press
\bibitem{Gi23} Giordani, A., Calosi, C. (2023) ``Atoms, Combs, Syllables and Organisms'', \textit{Philosophical studies}, 180:1995–2024
\bibitem{Ho08} Hovda, P. (2008) ``What is Classical Mereology?'',
\textit{Journal of Philosophical Logic}, 38(1): 55–82
\bibitem{Kl17} Kleinschmidt, S. (2019) ``Fusion First'', \textit{Noûs}, 53(3): 689-707
\bibitem{Le16} Leśniewski, S. (1916) Podstawy ogólnej teoryi mnogości. I
[Foundations of the general theory of sets. I], \textit{Prace Polskiego
Koła Naukowego w Moskwie}: Moskwa
1'' [On the foundations of mathematics], \textit{Przegląd Filozoficzny},
XXX: 164–206
\bibitem{Le92} Leśniewski, S. (1992) Collected Works I; Surma, S.J.,
Srzednicki, J.T., Barnett, D.I., Rickey, V.F., Eds.; Kluwer: Dordrecht,
The Netherlands
\bibitem{Lo21} Loss, R. (2021) ``Two Notions of Fusion and the Landscape
of Extensionality'', \textit{Philosophical Studies}, 178(10): 3443–3463
\bibitem{Sh14} Shiver, A. (2014) ``How do You Say “Everything is Ultimately Composed of Atoms”?'', \textit{Philosophical Studies}, 172(3): 607–614
\bibitem{Si87} Simons, P. (1987) \textit{Parts: A Study in Ontology},
Oxford: Clarendon Press.
\bibitem{So84} Sobociński, B. (1984) ``Studies in Leśniewski’s
Mereology'', In: Srzednicki, J.T.J., Rickey, V.F. (eds) Leśniewski’s
Systems. Nijhoff International Philosophy Series, vol. 13, Dordrecht: Springer

\bibitem{SwLy20} Świętorzecka, K., Łyczak, M. (2020) ``Mereology with Super-Supplementation Axioms. A Reconstruction of the Unpublished Manuscript of Jan F. Drewnowski'', \textit{Logic and Logical Philosophy}, 29(2): 189–211
\bibitem{Ts15} Tsai, H., Varzi, A. (2015) ``Atoms, Gunk, and the
Limits of ``Composition"'', \textit{Erkenntnis}, 81(2): 231–235
\bibitem{Ta56} Tarski, A. (1956) ``Foundations of the Geometry of
Solids'', in: \textit{Logic, Semantics, Metamathematics. Papers from 1923 to
1938}, Oxford, 1956, 24–29
\bibitem{Ur14} Urbaniak R. (2014) \textit{Leśniewski's Systems of Logic
and Foundations of Mathematics}, Springer
\bibitem{Uz17} Uzquiano, G. (2017) ``Atomism and Composition'',
\textit{Thought}, 6(4): 232–40
\bibitem{Va17} Varzi, A. (2017) ``On Being Ultimately Composed of
Atoms'', \textit{Philosophical Studies}, 174(11): 2891–2900
\bibitem{Va19} Varzi, A. (2019) ``On Three Axiom Systems for
Classical Mereology'', \textit{Logic and
Logical Philosophy}, 28(2): 203–207
\bibitem{VaStanford} Varzi, A. (2019) ``Mereology'', \textit{The Stanford Encyclopedia of Philosophy} (Spring 2019 Edition), E. Zalta (ed.), \url{https://plato.stanford.edu/archives/spr2019/entries/mereology/}
\bibitem{VaLy22} Varzi, A., Łyczak, M. (2022) ``Fusion as a Primitive Notion of General Extensional Mereology'' (unpublished manuscript created during Marcin Łyczak internship at Columbia University 16.11.2022- 8.12.2022)
\end{thebibliography}
\end{document}